\title[Local well-posedness for a critical C-F equation]{ Local mass-conserving solution for a critical Coagulation-Fragmentation equation}
\author{Hung V. Tran}
\author{Truong-Son Van}
\date{\today}
\keywords{
    critical coagulation-fragmentation equations; singular Hamilton-Jacobi equations;
    Bernstein transform; viscosity solutions; local well-posedness.
}
\subjclass[2010]{
35D40, 
35F21,  
44A10, 
45J05,
49L20, 
49L25. 
}
\thanks{
The work of HT is partially supported by NSF CAREER grant DMS-1843320 and a Simons Fellowship.
The work of T-SV was mostly done when he was a Hans Rademacher Instructor at the University
of Pennsylvania.
}
\address[H. V. Tran]
{
Department of Mathematics, 
University of Wisconsin Madison, Van Vleck Hall, 480 Lincoln Drive, Madison, Wisconsin 53706, USA}
\email{hung@math.wisc.edu}
\address[T.-S. Van]
{
Fulbright University Vietnam, 
105 Ton Dat Tien, District 7,
Ho Chi Minh City, Vietnam}
\email{son.van@fulbright.edu.vn}
\begin{document}

\maketitle

\begin{abstract}
    The critical coagulation-fragmentation
    equation
    with multiplicative coagulation and constant fragmentation kernels
    is known to not have  global mass-conserving solutions
     when the initial mass is greater than $1$.
     We show that for any given positive initial mass with finite second moment,
     there is a time $T^*>0$ such that the equation possesses
     a unique mass-conserving solution up to $T^*$.
     The novel idea is to singularly perturb the constant fragmentation kernel
     by small additive terms and study the limiting behavior of the solutions
     of the  perturbed system via the Bernstein transform.
\end{abstract}

\smallskip

\section{Introduction}

We study the local well-posedness of the following coagulation-fragmentation
equation (C-F)
\begin{equation}
    \label{eq:cf}
    \begin{dcases}
    \partial_t \rho (s,t) = Q_C(\rho)(s,t) +Q_F(\rho)(s,t)
    & \text{ in } (0,\infty)\times (0,\infty)\,, \\
    \rho(s,0) = \rho_0(s) & \text{ in } (0,\infty).
    \end{dcases}
\end{equation}
Here, $\rho(s,t)$ is the density of particles of size $s$ at time $t\geq 0$.
The coagulation and fragmentation terms are given by
\begin{gather*}
    Q_C(\rho)(s,t) = \frac{1}{2}\int_0^s a(s -\hat s,\hat s) \rho(s-\hat s,t) \rho( \hat s,t)\, d\hat s 
                     - \int_0^\infty a(s,\hat s) \rho(s,t) \rho(\hat s,t) \, d\hat s \,,\\
    Q_F(\rho)(s,t) = -\frac{1}{2}\int_0^s b(s -\hat s,\hat s) \rho(s,t) \, d\hat s 
                     + \int_0^\infty b(s,\hat s) \rho(s+\hat s,t)  \, d\hat s \,.
\end{gather*}
The kernels of interest are
\begin{equation*}
    a(s,\hat s) = s\hat s \,, \qquad b(s,\hat s) = 1 \quad
    \text{ for } s,\hat s > 0\,.
\end{equation*}
Let $m_k(t)$ be the $k$-th moment of  $\rho (\cdot,t)$ for $k, t\geq 0$,
that is,
\begin{equation*}
    m_k(t) = \int_0^\infty s^k  \rho(s,t) \, ds \,.
\end{equation*}
In particular the first moment $m_1(t)$ represents the total 
mass of the system~\eqref{eq:cf} at time $t\geq 0$.

It was conjectured that with this specific choice of kernels, if the system starts out
with initial mass $m_1(0)\leq 1$, then there will be a unique mass-conserving solution (see~\cite{VigilZiff1989, EscobedoLaurenccotMischlerPerthame2003}). 
It has been proven that
mass-conserving solutions cannot exist for all time when $m_1(0) > 1$ in~\cite{BanasiakLambLaurencot2019, TranVan2021}.

The history of C-F dates back more than a century ago with the work~\cite{Smoluchowski1916}.
For more extensive discussions of the subject, we refer the readers to 
the following works~\cite{Aldous1999, Costa2015, BanasiakLambLaurencot2019, TranVan2021}.
Although there has been a lot of advancement in the field, 
 a lot still remains to be discovered, including the following
question we set out to answer in this paper.

\begin{question*}
   Fix $m_1(0)>1$. Is there a time $T >0$ such that 
   the equation~\eqref{eq:cf} still has 
   a unique mass-conserving solution
   for $0\leq t < T$?
\end{question*}
The answer to such question is not obvious as there are systems of coagulation-fragmentation
equations that exhibit instantaneous mass loss, that is, $m_1(t) < m_1(0)$ for all $t>0$. 
To the best of our knowledge, past studies were divided into 
two main themes: local well-posedness of mass-conserving solutions 
for the pure coagulation equation~\cite{McLeod1962,McLeod1964,Norris2000,FournierLaurencot2006}
and local-wellposedness for weak/mild solutions for the coagulation-fragmentation equation~\cite{BanasiakLambLaurencot2019}.
Our paper is the first to study the local-wellposedness of mass-conserving solutions to a full coagulation-fragmentation equation in the situation when global mass-conserving solutions are known to not exist.

For $t\geq 0$, let $\pi(\cdot,t)$ be the distribution corresponding to the density $\rho(\cdot,t)$, i.e.,
\begin{equation*}
    \pi(s,t) = \int_{(0,s]} \rho(r,t)\, dr  \quad \text{ for } s>0\,.
\end{equation*}
By probabilistic convention, we use the same notion $\pi(\cdot,t)$ to denote the measure on $(0,\infty)$ with this distribution function.
We always use the following  notion of weak solutions to~\eqref{eq:cf}.
\begin{definition}
    \label{def:weak-sol}
    We say that $\rho$ is a weak solution of 
    the equation~\eqref{eq:cf} in the measure sense if 
    \begin{align*}
        \frac{d}{dt} \int_0^\infty \phi(s)d\pi(s,t) 
        & =  \frac{1}{2}\int_0^\infty \int_0^\infty \paren[\big]{ \phi(s+\hat s) - \phi(s) - \phi(\hat s)  }a(s,\hat s) 
        \, d\pi(s,t)\, d\pi(\hat s,t) \\
        &\quad - \frac{1}{2}\int_0^\infty \int_0^s \paren[\big]{ \phi(s) - \phi(s-\hat s) - \phi(\hat s)  } b(s-\hat s, \hat s) 
       \, d\hat s\, d\pi( s,t) \,,
    \end{align*}
    for all test functions $\phi \in \mathrm{BC}([0,\infty))\cap \mathrm{Lip}([0,\infty))$ with $\phi(0) = 0$.
    Here, $\mathrm{BC}([0,\infty))$ and $\mathrm{Lip}([0,\infty))$ 
    are the classes of bounded continuous functions and Lipchitz continuous functions on $[0,\infty)$, respectively.
\end{definition}
For $x\geq 0$, let $\phi_x(s) = 1 - e^{-xs}$ be a test function, 
and denote by $F(x,t)$ the Bernstein transform of $\rho(s,t)$,
that is,
\begin{equation*}
    F(x,t) = \mathfrak{B}[\rho(\cdot,t)](x) \defeq \int_0^\infty (1 - e^{-xs})\,  d\pi(s,t)  \,.
\end{equation*}
Define $m \defeq m_1(0)$.
If $m_1(t)  = m $ for $t\in [0,T)$ with some $T>0$, then 
$F$ satisfies the following singular Hamilton-Jacobi equation
\begin{equation}
    \label{eq:main}
    \begin{dcases}
        \partial_t F +  \frac{1}{2} (\partial_x F - m)(\partial_x F - m -1) 
        + \frac{F}{x} - m = 0 & \text{ in } (0,\infty)\times (0,T)\,, \\
        0 \leq F(x,t) \leq mx & \text{ on } [0,\infty)\times [0,T)\,, \\
        F(x,0) = F_0(x) = \mathfrak{B}[\rho_0](x) & \text{ on } [0,\infty) \,.
    \end{dcases}
\end{equation}

Our main result in this paper is the following.
\begin{theorem}
    \label{t:main}
    Let $\rho_0$ be a density function such that 
   $m=m_1(0) \geq1/2$, and $m_2(0)>0$. 
   Then, there exists a unique mass-conserving weak solution  
   to equation~\eqref{eq:cf}  for 
   $t\in [0, T^*)$, where
   \begin{equation*}
       T^* = \frac{6m}{6 m-1}\frac{1}{m_2(0)} \,.
   \end{equation*}
\end{theorem}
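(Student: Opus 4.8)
The plan is to establish Theorem~\ref{t:main} by analyzing the singular Hamilton-Jacobi equation~\eqref{eq:main} satisfied by the Bernstein transform $F$, using a singular perturbation of the fragmentation kernel as advertised in the abstract. First I would replace the constant fragmentation kernel $b \equiv 1$ by $b_\varepsilon(s,\hat s) = 1 + \varepsilon(s + \hat s)$ (or a similar additive perturbation), which renders the perturbed C-F system super-critical in a way that guarantees, by known results, the existence of a global mass-conserving weak solution $\rho_\varepsilon$ with well-controlled moments. Taking the Bernstein transform, $F_\varepsilon(x,t) = \mathfrak{B}[\rho_\varepsilon(\cdot,t)](x)$ should satisfy a regularized Hamilton-Jacobi equation in which the singular term $F/x$ is tamed by the extra fragmentation; the point of the perturbation is precisely that the $\varepsilon$-term removes (or controls) the degeneracy at $x = 0$, so that $F_\varepsilon$ is a bona fide viscosity solution on $[0,\infty) \times [0,\infty)$.

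Next I would derive uniform-in-$\varepsilon$ a priori bounds. The key constraint is the two-sided bound $0 \le F_\varepsilon(x,t) \le mx$, together with a uniform Lipschitz bound in $x$ coming from $\partial_x F_\varepsilon \in [0, m]$ (since $F_\varepsilon$ is the Bernstein transform of a positive measure of total mass $m$), and a modulus of continuity in $t$. Crucially I expect that the finite second moment hypothesis $m_2(0) < \infty$ is what controls $\partial_x F_\varepsilon(0^+, t)$ — indeed $\partial_x F(0^+, t) = m_1(t)$ and the curvature $\partial_{xx} F(0^+, t) = -m_2(t)$, and one should be able to show via the equation (or via a differential inequality for $m_2(t)$ derived by testing with $\phi(s) = s^2$) that $m_2(t)$ stays finite, and in fact blows up no sooner than $T^* = 1/m_2(0)$. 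This gives the time horizon: tracking the ODE $\dot m_2 \le m_2^2$ (which is what the quadratic coagulation term $a(s,\hat s) = s\hat s$ produces when integrated against $s^2$, since $(s+\hat s)^2 - s^2 - \hat s^2 = 2 s \hat s$, yielding $\tfrac12 \int\int 2 s\hat s \cdot s\hat s \, d\pi\, d\pi = m_2^2$, minus a nonnegative fragmentation contribution), one gets $m_2(t) \le m_2(0)/(1 - m_2(0) t)$ for $t < T^*$.

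With these uniform bounds I would extract a limit: by Arzelà–Ascoli (using equi-Lipschitz in $x$ and equicontinuity in $t$ on compact subsets of $[0,\infty) \times [0,T^*)$), pass to a locally uniform limit $F_\varepsilon \to F$ along a subsequence. Stability of viscosity solutions then shows $F$ solves~\eqref{eq:main} on $(0,\infty)\times(0,T^*)$; the bounds $0 \le F \le mx$ and the initial condition pass to the limit. One then inverts the Bernstein transform to recover a measure-valued $\pi(\cdot,t)$ and checks that the resulting $\rho$ is a weak solution of~\eqref{eq:cf} in the sense of Definition~\ref{def:weak-sol}, and that mass is conserved — this last point should follow from $\partial_x F(0^+,t) = m$ for all $t < T^*$, which must be read off from the equation~\eqref{eq:main} itself (evaluating the PDE as $x \to 0^+$ and using $F/x \to \partial_x F(0^+,t)$ forces a relation pinning $\partial_x F(0^+,t) = m$). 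For uniqueness, I would prove a comparison principle for~\eqref{eq:main}: this is the delicate part because of the singular term $F/x$ and the quadratic (non-monotone, non-convex in the relevant range) nonlinearity $\tfrac12(\partial_x F - m)(\partial_x F - m - 1)$. The standard doubling-of-variables argument must be adapted to handle the $1/x$ singularity near the boundary $x = 0$, presumably by exploiting the sign/structure of $F/x$ (it is nonnegative and vanishes appropriately) and the constraint $F \le mx$ to control the bad term, perhaps with a logarithmic or linear penalization in $x$.

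The main obstacle I anticipate is twofold: (i) making the comparison principle for the singular HJ equation~\eqref{eq:main} rigorous near $x = 0$, where the term $F/x$ is only controlled through the a priori bound $F \le mx$ rather than through any ellipticity or coercivity, and where the Hamiltonian is not coercive in $\partial_x F$; and (ii) verifying that the limit $F$ genuinely corresponds to a \emph{mass-conserving} solution — i.e., ruling out the instantaneous mass loss phenomenon flagged in the introduction — which amounts to showing that $F(x,t)/x \to m$ as $x\to 0$ uniformly for $t < T^*$, equivalently that no mass escapes to $s = +\infty$ before time $T^*$. I expect (ii) to follow from the finite-second-moment bound propagated up to $T^*$, since finite $m_2(t)$ precludes a gelation-type mass defect, but the precise argument linking the boundary behavior of the viscosity solution $F$ to mass conservation of the reconstructed $\rho$ is where the real work lies.
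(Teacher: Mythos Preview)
Your plan is essentially the paper's: perturb by $b^\varepsilon(s,\hat s)=1+\varepsilon(s+\hat s)$, derive $\dot m_2^\varepsilon \le (m_2^\varepsilon)^2$ to get the horizon $T^*=1/m_2(0)$, pass $F^\varepsilon\to\bar F$ via Arzel\`a--Ascoli and viscosity stability, and prove uniqueness by a comparison principle for~\eqref{eq:main}. The comparison principle turns out to be simpler than you fear: the paper replaces $1/x$ by $1/\max\{1/n,x\}$ and checks that any subsolution $u$ of~\eqref{eq:main} remains a subsolution of the cut-off problem while $v+m/n$ becomes a supersolution, reducing to classical comparison and then sending $n\to\infty$. (Incidentally, the Hamiltonian $\tfrac12(p-m)(p-m-1)$ is convex and coercive in $p$, so your parenthetical about non-convexity is off.)

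The one substantive step your outline glosses over is showing that the limit $\bar F(\cdot,t)$ is actually a Bernstein function, without which you cannot invert the transform. The paper supplies this in two parts: first it proves $\bar F\in C^\infty((0,\infty)\times(0,T^*))$ by the method of characteristics --- the second-moment bound gives $\partial_x^2\bar F \ge -1/(T^*-t)$ in the viscosity sense, which prevents characteristics from crossing on $(0,T^*)$ --- and then it passes the inequalities $(-1)^{k-1}\partial_x^k F^\varepsilon\ge 0$ through the limit in weak form and integrates by parts using the smoothness just established. Your concern (ii) about mass conservation is resolved exactly as you anticipate, via the uniform second-moment bound forcing $\partial_x\bar F(0^+,t)=m$.
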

In the case when $a(s,\hat s) = s\hat s$ and $b(s,\hat s) = 0$, it
was shown in~\cite{MenonPego2004} that the C-F loses mass exactly after $T = m_2(0)^{-1}>0$.
Therefore, our result is consistent with that in~\cite{MenonPego2004}. 
The reason for this is
 that fragmentation helps prevent gelation (mass loss by formation of infinite-size particles). 
Equation~\eqref{eq:cf} is more complicated because of the interaction between coagulation and fragmentation kernels.
We note that  $T^*$ is not known to be sharp in Theorem~\ref{t:main}.

We briefly summarize the progress of the conjecture 
in~\cite{VigilZiff1989, EscobedoLaurenccotMischlerPerthame2003} for $m=m_1(0)\in (0,1]$.
Under certain assumptions, global existence and uniqueness of 
mass-conserving solutions when $m\leq 1/(4\log 2)$
was proven in~\cite{Laurencot2020} by the moment-bound method.
By studying equation~\eqref{eq:main}, the authors of this paper
obtained the global well-posedness for 
$m < 1/2$ in~\cite{TranVan2021}, which means that $T^*=+\infty$ in this regime.
Furthermore, while uniqueness of mass-conserving solutions for 
$m\in [1/2,1]$
was established in~\cite{TranVan2021}, 
the existence question remains an outstanding open problem.
For large time behavior results of~\eqref{eq:main}, 
see~\cite{TranVan2021,MitakeTranVan2021}.

We continue pushing limit of  Bernstein transform
to study equation~\eqref{eq:main}
and
 to establish Theorem~\ref{t:main}.
 This technique has gained fruitful results in the past for 
 so-called ``solvable kernels''~\cite{MenonPego2004,MenonPego2008, DegondLiuPego2017,TranVan2021,MitakeTranVan2021}.
 The novel idea of our approach in this work
 is to singularly perturb $b(s,\hat s) = 1$ 
 by $b^\epsilon(s,\hat s) = 1 + \epsilon(s+\hat s)$
 for $\epsilon >0$
 and study the limiting behavior of the solutions
 of the  perturbed system via the Bernstein transform.

\subsection*{Outline of the paper}
In Section~\ref{sec:approximating-frag}, we introduce an approximating system to~\eqref{eq:cf} and study various
properties of it that is inherent to~\eqref{eq:cf}.
In Section~\ref{sec:wellposed}, we give a proof of Theorem~\ref{t:main} by taking the limit
of the approximating system.
Our arguments are based on studying viscosity solutions of~\eqref{eq:main}.
Finally, in Appendix, we give a heuristic argument explaining
why one should expect that our approximating 
system possesses a mass-conserving solution.

\subsection*{Acknowledgements}
We thank Professor Philippe Laurencot for 
some discussions concerning the approximation approach.

\section{Approximating fragmentations}
\label{sec:approximating-frag}
In this section, we always assume the setting of Theorem~\ref{t:main}.
In order to study equation~\eqref{eq:cf}, we regularize it by adding a small additive term to
the fragmentation kernel. More precisely, for $\epsilon>0$, we consider

\begin{equation}
    \label{eq:cf-eps}
    \begin{dcases}
    \partial_t \rho^\epsilon (s,t) = Q_C(\rho^\epsilon)(s,t) +Q^\epsilon_F(\rho^\epsilon)(s,t)
    & \text{ in } (0,\infty)\times (0,\infty)\,, \\
    \rho^\epsilon(s,0) = \rho_0(s) & \text{ in } (0,\infty).
    \end{dcases}
\end{equation}
Here, the corresponding fragmentation kernel is
\begin{equation*}
    b^\epsilon (s, \hat s) = 1 + \epsilon(s + \hat s) \quad \text{for } s,\hat s >0\,.
\end{equation*}
Equation~\eqref{eq:cf-eps} has been shown to have a unique
mass-conserving solution by~\cite{EscobedoLaurenccotMischlerPerthame2003}.
By studying the second moment of $\rho^\epsilon$, we can show
that the second moment of the solution to equation~\eqref{eq:cf}, if exists, is finite
up to  time $T^* =  \frac{6m}{6 m-1}\frac{1}{m_2(0)}$.

Let $F^\epsilon(x,t) = \mathfrak{B}[\rho^\epsilon(\cdot,t)](x)$, the Bernstein transform of 
$\rho^\epsilon$.
Then, $F^\epsilon$ satisfies the following equation.


    \begin{equation}
    \label{eq:eps}
    \begin{dcases}
        \partial_t F^\epsilon +  \frac{1}{2} (\partial_x F^\epsilon - m)(\partial_x F^\epsilon - m -1) 
        + \frac{F^\epsilon}{x} - m 
        = \epsilon G^\epsilon(x,t)  & \text{ in } (0,\infty)^2 \,,\\
        0\leq F^\epsilon \leq mx & \text{ on } [0,\infty)^2 \,, \\
        F^\epsilon(x,0) = F_0(x) & \text{ on } [0,\infty) \,.
    \end{dcases}
    \end{equation}
    Here,
    \begin{equation*}
        G^\epsilon (x,t) \defeq 
            \frac{m_2^\epsilon(t)}{2} - \frac{\partial_x^2 F^\epsilon (x,t)}{2} 
                - \frac{1}{x}\paren[\big]{m - \partial_x F^\epsilon(x,t)} \,.
    \end{equation*}
   For derivations of equations~\eqref{eq:main} and~\eqref{eq:eps}, 
   we refer the reader to~\cite{TranVan2021,MitakeTranVan2021}.
\begin{remark}
   It is interesting to note that equation~\eqref{eq:eps} is
   a backward parabolic equation.
   The well-posedness theory for this equation from the 
   PDE viewpoint is a very interesting open question.
\end{remark}


\begin{lemma}
    \label{lem:eps-second-moment-bound}
   For $\epsilon >0$, 
    let $\rho^\epsilon$ be the mass-conserving solution to equation~\eqref{eq:cf-eps}.
   Then, for  $0 \leq t< T^*$,
    \begin{equation*}
        m_2^\epsilon(t) \leq \frac{1}{ m_2(0)^{-1} -(6m)^{-1}(6m-1) t} \leq \frac{2}{T^* - t}\,.
    \end{equation*}
\end{lemma}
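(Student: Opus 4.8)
The plan is to derive a differential inequality for $m_2^\epsilon(t)$ and then integrate it. First I would test the weak formulation in Definition~\ref{def:weak-sol} (with the $\epsilon$-fragmentation kernel $b^\epsilon$) against $\phi(s) = s^2$. Since $\phi$ is not bounded, this is formally illegitimate, but it can be justified by truncation (test against $\phi_R(s) = \min\{s^2, R^2\}$ or a smooth cutoff and pass to the limit $R\to\infty$, using that $m_2^\epsilon(t)<\infty$ for the mass-conserving solution constructed in~\cite{EscobedoLaurenccotMischlerPerthame2003}). The coagulation contribution comes from $\phi(s+\hat s) - \phi(s) - \phi(\hat s) = 2 s\hat s$, which against $a(s,\hat s) = s\hat s$ and the two copies of $\pi^\epsilon$ produces $\frac12 \int\int 2 s\hat s \cdot s \hat s \, d\pi^\epsilon d\pi^\epsilon = (m_2^\epsilon(t))^2$. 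The fragmentation contribution comes from $\phi(s) - \phi(s-\hat s) - \phi(\hat s) = 2\hat s(s - \hat s)$, which against $b^\epsilon(s - \hat s, \hat s) = 1 + \epsilon s$ gives a manifestly nonpositive term: $-\frac12 \int_0^\infty \int_0^s 2\hat s(s-\hat s)(1 + \epsilon s)\, d\hat s\, d\pi^\epsilon(s,t) \le 0$. Hence
\begin{equation*}
    \frac{d}{dt} m_2^\epsilon(t) \leq \paren[\big]{m_2^\epsilon(t)}^2 \qquad \text{for } t \in [0,T^*).
\end{equation*}

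The second step is to integrate this Riccati-type inequality. With $y(t) = m_2^\epsilon(t)$ satisfying $y' \le y^2$ and $y(0) = m_2(0)$, a standard comparison argument — writing $\frac{d}{dt}\paren{-\tfrac1y} = \tfrac{y'}{y^2} \le 1$ wherever $y>0$, so $-\tfrac1{y(t)} + \tfrac1{y(0)} \le t$ — yields $\tfrac1{y(t)} \ge \tfrac1{m_2(0)} - t = m_2(0)^{-1} - t$, which is positive on $[0,T^*)$. Inverting gives exactly $m_2^\epsilon(t) \le \paren[\big]{m_2(0)^{-1} - t}^{-1}$, which is the claimed bound. Note the bound is uniform in $\epsilon$, which is precisely what is needed later to pass to the limit.

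The main obstacle, and the only genuinely delicate point, is the justification of plugging the unbounded test function $\phi(s) = s^2$ into the weak formulation and of the resulting differentiability in $t$: one must know a priori that $m_2^\epsilon(t)$ is finite (and, say, locally bounded) on $[0,T^*)$ before the estimate is meaningful, which risks circularity. The standard way around this is to run the truncated computation with $\phi_R$, obtaining $\frac{d}{dt}\int \phi_R \, d\pi^\epsilon \le \paren{\int \phi_R\, d\pi^\epsilon}^2 + (\text{error terms vanishing as } R\to\infty)$ — here one uses that the coagulation gain term $\phi_R(s+\hat s)$ is controlled by $\phi_R(s) + \phi_R(\hat s) + 2\min\{s,R\}\min\{\hat s,R\}$ and the fragmentation terms keep the right sign under truncation — then applying the Riccati comparison to $\int\phi_R\,d\pi^\epsilon$ uniformly in $R$ and finally invoking monotone convergence as $R\to\infty$. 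Once finiteness and the bound are secured simultaneously this way, the rest is routine.
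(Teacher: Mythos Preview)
Your core argument---the Riccati inequality $\frac{d}{dt}m_2^\epsilon \le (m_2^\epsilon)^2$ obtained by testing against $\phi(s)=s^2$, followed by integration---is exactly the paper's strategy, and your computations of the coagulation and fragmentation contributions are correct.

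The difference lies in how the use of the unbounded test function is justified. You truncate the \emph{test function} to $\phi_R=\min\{s^2,R^2\}$ while working with the full solution $\rho^\epsilon$, and then lean on the a~priori finiteness of $m_2^\epsilon$ supplied by \cite{EscobedoLaurenccotMischlerPerthame2003} to close the argument and dispose of the error terms; you correctly flag the circularity risk and outline the standard fix. The paper instead truncates the \emph{equation}: it replaces $a,b^\epsilon,\rho_0$ by cut-off versions $a^n,b^{\epsilon,n},\rho_0^n$ supported on $[0,n]$, so that the approximate solution $\rho^{\epsilon,n}$ is compactly supported and the bounded Lipschitz test function $s^2\one_{[0,n]}+n^2\one_{(n,\infty)}$ is legitimately admissible in Definition~\ref{def:weak-sol}. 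The Riccati inequality is then exact for $m_2^{\epsilon,n}$, with no error terms, and the limit $n\to\infty$ is handled by a compactness result (\cite[Remark~3.10]{EscobedoLaurenccotMischlerPerthame2003}). The paper's route thereby sidesteps entirely the circularity you worry about, at the cost of invoking an external compactness/convergence statement; your route is lighter on machinery but depends on importing the finiteness of $m_2^\epsilon$ from the same reference. Both are valid; they simply place the appeal to \cite{EscobedoLaurenccotMischlerPerthame2003} at different points.
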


\begin{proof}
    We proceed by using the method in~\cite[Theorem 3.1]{EscobedoLaurenccotMischlerPerthame2003}.
    The idea is to use a cut-off technique and the moment-bound method.
    For $n\in \N$, define
    \begin{gather*}
        a^n(s,\hat s) \defeq a(s,\hat s)\one_{[0,n]}(s+\hat s)\,, \\
        b^{\epsilon,n}(s,\hat s) \defeq b^{\epsilon}(s,\hat s)\one_{[0,n]}(s+\hat s )\,, \\
        \rho^n_0 \defeq \rho_0 \one_{[0,n]}\,.
    \end{gather*}
    Denote the coagulation and fragmentation terms corresponding the above kernels as
    $Q_C^n$ and $Q_F^{\epsilon,n}$, respectively.
    Let $\rho^{\epsilon,n}$ be the solution to the equation  
    \begin{equation}
        \label{eq:cf-eps-n}
        \begin{dcases}
        \partial_t \rho^{\epsilon,n} (s,t) = Q_C^n(\rho^{\epsilon,n})(s,t) +Q_F^{\epsilon,n}(\rho^{\epsilon,n})(s,t)
        & \text{ in } (0,\infty)\times (0,\infty)\,, \\
        \rho^{\epsilon,n}(s,0) = \rho_0^n(s) & \text{ in } (0,\infty).
        \end{dcases}
    \end{equation}

    Then, $\supp(\rho^{\epsilon,n}(\cdot,t)) \subseteq [0,n]$ for every $t\geq 0$.
    Let $\pi^{\epsilon,n}(\cdot,t)$ be the distribution corresponding to the density $\rho^{\epsilon,n}(\cdot,t)$.
    Take  $\phi(s) = s^2\one_{[0,n]}(s) + n^2 \one_{(n,\infty)}(s)$ as a test function
    in Definition~\ref{def:weak-sol}.
    Then,
    \begin{align*}
        \frac{d}{dt} m_2^{\epsilon,n}(t) 
        & =  
        \int_0^\infty \int_0^\infty s^2\hat s^2 \, \one_{[0,n]}(s+\hat s) 
        \, d\pi^{\epsilon,n}(s,t)\, d\pi^{\epsilon,n}(\hat s,t) \\
        &\quad - \int_0^\infty \int_0^{s} (s-\hat s)\hat s \one_{[0,n]}(s)
       \, d\hat s\,  (1+\epsilon s)\, d\pi^{\epsilon,n}( s,t) \\
        & \leq 
        \int_0^\infty \int_0^\infty s^2\hat s^2  
        \, d\pi^{\epsilon,n}(s,t)\, d\pi^{\epsilon,n}(\hat s,t)  - \int_0^\infty \frac{s^3}{6} \, \one_{[0,n]}(s) 
        \, d\pi^{\epsilon,n}(s,t)\\
        &= m_2^{\epsilon,n}(t)^2 -\frac{m_3^{\epsilon,n}(t)}{6} \leq \left( 1- \frac{1}{6m}\right) m_2^{\epsilon,n}(t)^2\,,
    \end{align*}
    where we used the Cauchy-Schwarz inequality in the last line
    \[
    m m_3^{\epsilon,n}(t) \geq m_1^{\epsilon,n}(t) m_3^{\epsilon,n}(t) \geq m_2^{\epsilon,n}(t)^2 .
    \]
    Therefore, for $t <  \frac{6m}{6 m-1}\frac{1}{m_2^n(0)}$,
    \begin{equation*}
        m_2^{\epsilon,n}(t) \leq \frac{1}{ m_2^{n}(0)^{-1} - (6m)^{-1}(6m-1)t}\,.
    \end{equation*}
    Letting $n\to\infty$ and using the compactness result in~\cite[Remark 3.10]{EscobedoLaurenccotMischlerPerthame2003}, 
    we have
    \begin{equation*}
        m_2^{\epsilon}(t) \leq \frac{1}{ m_2(0)^{-1} - (6m)^{-1}(6m-1) t}\,,
    \end{equation*}
    for $t <  \frac{6m}{6 m-1}\frac{1}{m_2(0)}$.
      The proof of the first inequality is finished by picking $T^* = \frac{6m}{6 m-1}\frac{1}{m_2(0)}$.

      For the second inequality, we recall that by the assumptions in Theorem~\ref{t:main},
      $m\geq 1/2$.
      Therefore,
      \begin{equation*}
          \frac{1}{ m_2(0)^{-1} - (6m)^{-1}(6m-1) t} = \frac{1}{ (6m)^{-1}(6m-1) (T^* - t)} 
          \leq \frac{2}{T^* - t} \,,
      \end{equation*}
      as desired.
\end{proof}

\begin{remark}
    \label{rem:general}
    While Lemma~\ref{lem:eps-second-moment-bound} provides the crucial 
    estimate for us to pass the equation to the limit below, it is not
    enough to conclude that the limiting equation would have local 
    mass-conserving solutions.
    Indeed, it is evidently clear that we would be able to control 
    the second moment of the limiting C-F equation for $0<t<T^*$.
    However, for a more general fragmentation kernel,
    it is unclear whether mass is lost 
    via fragmentation (dust) or not.
    The subtlety here lies in the fact that in order to achieve
    mass-conserving solutions, one needs to be able to control
    the strength of both coagulation and fragmentation kernels,
    which are of equal importance.
    Extreme care needs to be paid when there is a competition between coagulation and fragmentation.
    The use of Bernstein transforms and the study of the regularity
    of the Hamilton-Jacobi equation help us bypass this fine point.
\end{remark}

We now let $\epsilon \to 0$ in equation~\eqref{eq:eps} to obtain a viscosity solution
of~\eqref{eq:main}.

\begin{lemma}
    \label{lem:convergence-F}
    For each $\epsilon >0$, let $F^\epsilon$ be the smooth solution to equation~\eqref{eq:eps}.
    Then, there exists $\bar F\in C([0,\infty)\times [0,T^*))$ such that, locally uniformly for  $(x,t) \in [0,\infty)\times[0,T^*)$,
    \begin{equation*}
        \lim_{i \to \infty} F^{\epsilon_i}(x,t) = \bar F(x,t)\,,
    \end{equation*}
    for some sequence $\set{\epsilon_i} \to 0$.
    Furthermore, $\bar F$ is a viscosity solution to~\eqref{eq:main}.
\end{lemma}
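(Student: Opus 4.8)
The plan is to obtain $\bar F$ as a half-relaxed limit of the family $\{F^\epsilon\}$ and then verify it solves \eqref{eq:main} in the viscosity sense. First I would establish the uniform bounds that make the limit procedure work: from \eqref{eq:eps} we already have $0 \le F^\epsilon \le mx$ on $[0,\infty)^2$, which controls $F^\epsilon$ and, by convexity/concavity-type arguments together with the Bernstein structure, gives a uniform Lipschitz bound $0 \le \partial_x F^\epsilon \le m$ in $x$ (indeed $\partial_x F^\epsilon(x,t) = \int_0^\infty s e^{-xs}\, d\pi^\epsilon(s,t)$ is between $0$ and $m_1^\epsilon = m$). For equicontinuity in $t$, I would use the equation itself: $\partial_t F^\epsilon$ is controlled in terms of $\partial_x F^\epsilon$, $F^\epsilon/x$ (bounded by $m$ since $F^\epsilon \le mx$), and the right-hand side $\epsilon G^\epsilon$; here Lemma~\ref{lem:eps-second-moment-bound} is essential, since $\epsilon G^\epsilon$ contains $\epsilon m_2^\epsilon(t)/2$, $-\epsilon \partial_x^2 F^\epsilon/2$, and $-\epsilon x^{-1}(m - \partial_x F^\epsilon)$, and the second-moment bound $m_2^\epsilon(t) \le (m_2(0)^{-1}-t)^{-1}$ forces the first term to vanish locally uniformly on $[0,T^*)$ as $\epsilon \to 0$. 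The $\epsilon \partial_x^2 F^\epsilon$ term is the delicate one — I return to it below. Granting these bounds, Arzelà–Ascoli produces a subsequence $\epsilon_i \to 0$ and a limit $\bar F \in C([0,\infty)\times[0,T^*))$ with local uniform convergence, and $\bar F$ inherits $0 \le \bar F \le mx$ and $\bar F(x,0) = F_0(x)$.

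Next I would verify the viscosity property. Let $\varphi$ be a smooth test function and suppose $\bar F - \varphi$ has a strict local max at an interior point $(x_0,t_0)$ with $x_0 > 0$. By local uniform convergence, $F^{\epsilon_i} - \varphi$ has a local max at some $(x_i,t_i) \to (x_0,t_0)$; since $F^{\epsilon_i}$ is smooth, at that point $\partial_t F^{\epsilon_i} = \partial_t \varphi$, $\partial_x F^{\epsilon_i} = \partial_x \varphi$, and $\partial_x^2 F^{\epsilon_i} \le \partial_x^2 \varphi$. Plugging into \eqref{eq:eps} and using the sign of $\partial_x^2 F^{\epsilon_i}$ in the term $-\epsilon \partial_x^2 F^{\epsilon_i}/2$ inside $\epsilon G^{\epsilon_i}$, one gets a one-sided inequality; passing $\epsilon_i \to 0$ (the $\epsilon_i$-terms all tend to $0$ by the bounds above) yields the subsolution inequality for \eqref{eq:main} at $(x_0,t_0)$. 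The supersolution inequality is symmetric, using local minima and the reversed Hessian inequality; here the sign of $-\epsilon\partial_x^2 F^{\epsilon_i}/2$ works in our favor in exactly the way needed because equation \eqref{eq:eps} is backward parabolic. The boundary behavior at $x = 0$ needs separate care: since $F^\epsilon(0,t) = 0$ and $F^\epsilon \le mx$, one checks $\bar F(0,t) = 0$ and interprets $F/x$ at $x=0$ via $\partial_x F(0,t)$, so that the equation makes sense up to the boundary (this is already understood in the formulation of \eqref{eq:main} as treated in \cite{TranVan2021,MitakeTranVan2021}).

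The main obstacle is controlling the term $\epsilon \partial_x^2 F^\epsilon$ so that it genuinely disappears in the limit, rather than merely having a favorable sign at test-function contact points. For the equicontinuity-in-$t$ estimate needed for Arzelà–Ascoli I cannot simply invoke the sign; I need a quantitative bound showing $\epsilon \|\partial_x^2 F^\epsilon(\cdot,t)\|$ is small, at least in an integrated or weak sense, locally uniformly on $[0,T^*)$. I expect this to follow from the probabilistic representation $\partial_x^2 F^\epsilon(x,t) = -\int_0^\infty s^2 e^{-xs}\, d\pi^\epsilon(s,t)$, so that $|\partial_x^2 F^\epsilon(x,t)| \le m_2^\epsilon(t) \le (m_2(0)^{-1}-t)^{-1}$ by Lemma~\ref{lem:eps-second-moment-bound}; hence $\epsilon |\partial_x^2 F^\epsilon| \le \epsilon (m_2(0)^{-1}-t)^{-1} \to 0$ locally uniformly on $[0,T^*)$, and likewise $\epsilon x^{-1}(m - \partial_x F^\epsilon) \le \epsilon m_2^\epsilon(t)$ (since $m - \partial_x F^\epsilon = \int_0^\infty s(1-e^{-xs})\,d\pi^\epsilon \le x m_2^\epsilon$ wait—more carefully, $(m-\partial_x F^\epsilon)/x \le m_2^\epsilon$ by the same Taylor bound) is also uniformly small. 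With these three estimates in hand, $\|\epsilon G^\epsilon(\cdot,t)\|_{L^\infty_x}$ is bounded by a constant depending only on $t < T^*$ times $\epsilon$, which both closes the equicontinuity argument and justifies dropping the right-hand side in the viscosity limit. The remaining routine point is verifying that the limit $\bar F$ is independent of the subsequence, but that is not claimed in this lemma and would follow later from a comparison principle for \eqref{eq:main}.
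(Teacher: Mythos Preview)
Your proposal is correct and follows essentially the same route as the paper: bound $\partial_x F^\epsilon$ via the Bernstein representation, bound $G^\epsilon$ uniformly on $[0,\infty)\times[0,T]$ using Lemma~\ref{lem:eps-second-moment-bound} (the paper handles the $x^{-1}(m-\partial_x F^\epsilon)$ term by the mean value theorem from $\partial_x F^\epsilon(0,t)=m$, which is exactly your integral estimate $(m-\partial_x F^\epsilon)/x \le m_2^\epsilon$), deduce a uniform bound on $\partial_t F^\epsilon$, extract a limit by Arzel\`a--Ascoli, and pass to the limit in the equation. The only cosmetic differences are that the paper cites the stability theorem for viscosity solutions rather than arguing at contact points, and that once you have established $\epsilon G^\epsilon \to 0$ uniformly (as you do in your third paragraph), your earlier discussion of the sign of $-\epsilon\,\partial_x^2 F^{\epsilon_i}/2$ at contact points becomes superfluous.
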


\begin{proof}
    We recall equation~\eqref{eq:eps} 
    \begin{equation*}
    \begin{dcases}
        \partial_t F^\epsilon +  \frac{1}{2} (\partial_x F^\epsilon - m)(\partial_x F^\epsilon - m -1) 
        + \frac{F^\epsilon}{x} - m 
        = \epsilon G^\epsilon(x,t)  & \text{ in } (0,\infty)^2 \,,\\
        0\leq F^\epsilon \leq mx & \text{ on } [0,\infty)^2 \,, \\
        F^\epsilon(x,0) = F_0(x) & \text{ on } [0,\infty) \,,
    \end{dcases}
    \end{equation*}
    where
    \begin{equation*}
        G^\epsilon (x,t) = 
            \frac{m_2^\epsilon(t)}{2} - \frac{\partial_x^2 F^\epsilon (x,t)}{2} 
                - \frac{1}{x}\paren[\big]{m - \partial_x F^\epsilon(x,t)} \,.
    \end{equation*}
    Recall that  $F^\epsilon$ is the Bernstein transform of $\rho^\epsilon$, i.e.,
    \begin{equation*}
        F^\epsilon(x,t) = \mathfrak{B}[\rho^\epsilon(\cdot,t)](x)\,.
    \end{equation*}
    Therefore, for $(x,t) \in [0,\infty)^2$, we have
    \begin{gather*}
         0\leq \partial_x F^\epsilon(x,t)
         = \int_0^\infty s e^{-sx} d\pi^{\epsilon}(s,t)
        \leq m \,, \\
          \abs{\partial_x^2 F^\epsilon(x,t)}
         = \int_0^\infty s^2 e^{-sx} d\pi^{\epsilon}(s,t)
        \leq m_2^\epsilon(t) \,.\\
    \end{gather*}
    Here, $\pi^{\epsilon}(\cdot,t)$ is the distribution corresponding to the density $\rho^{\epsilon}(\cdot,t)$.
    In particular,
    \begin{equation*}
        \partial_x F^\epsilon(0,t) = m\,.
    \end{equation*}
    By the mean value theorem,
    we have that for every $\epsilon>0$,
     $T< T^*$ and $(x,t)\in [0,\infty)\times [0,T]$,
    there exists $\theta \in (0,1)$ such that
    \begin{equation*}
        G^\epsilon(x,t) =  
            \frac{m_2^\epsilon(t)}{2} - \frac{\partial_x^2 F^\epsilon (x,t)}{2} 
            + \partial_x^2 F^\epsilon(\theta x,t)\,.
    \end{equation*}
    Therefore, by Lemma~\ref{lem:eps-second-moment-bound},
    \begin{equation}
        \label{ine:est-G}
        \sup_{(x,t)\in [0,\infty)\times [0,T]} \abs{G^\epsilon(x,t)} \leq \frac{5}{T^* - T}\,.
    \end{equation}
    Furthermore, we also have
    \begin{equation*}
        \abs[\Big]{\frac{1}{2} (\partial_x F^\epsilon - m)(\partial_x F^\epsilon - m -1) 
        + \frac{F^\epsilon}{x} - m} \leq  \frac{m(m+3)}{2}\,.
    \end{equation*}
    As a consequence, 
    for $(x,t)\in [0,\infty)\times [0,T]$ and $\epsilon \in (0,1)$,
    \begin{equation*}
        \abs{\partial_t F^\epsilon(x,t)} + \abs{\partial_x F^\epsilon(x,t)} \leq
        \frac{m(m+5)}{2} + \frac{5}{T^* - T}\,.
    \end{equation*}
    By the Arzel\`a-Ascoli theorem, there exist a function $\bar F\in C([0,\infty)\times [0,T^*))$  
    and a sequence $\set{\epsilon_i}\to 0$ such that
     $F^{\epsilon_i} \to \bar F$ locally uniformly on $[0,\infty)\times[0,T^*)$.

    For $\epsilon \geq 0$, write
    \begin{equation*}
        H^\epsilon(x,t,p,u) \defeq \frac{(p-m)(p-m-1)}{2} + \frac{u}{x} - m - \epsilon G^\epsilon(x,t)\,.
    \end{equation*}
    Equations~\eqref{eq:eps} can be rewritten as 
    \begin{equation*}
        \partial_t F^\epsilon + H^\epsilon(x,t, \partial_x F^\epsilon, F^\epsilon) = 
        0 \quad \text{ in } (0,\infty)\times (0,T^*)\,.
    \end{equation*}
    Furthermore, we have that by estimate~\eqref{ine:est-G},
    \begin{equation*}
        H^\epsilon \to H^0
    \end{equation*}
    locally uniformly in $(0,\infty)\times (0,T^*)\times [0,m]\times (0,\infty)$.
    Thus, by the stability of viscosity solutions (see, e.g.,~\cite{Tran2021}), 
    $\bar F$ is a viscosity solution of equation~\eqref{eq:main}.
\end{proof}

\begin{lemma}
    \label{lem:convergence-DF}
    Let 
     $\bar F$ be a viscosity solution to equation~\eqref{eq:main} given
    by Lemma~\ref{lem:convergence-F}.
    Then, in the viscosity sense, in $(0,\infty)\times (0,T)$ for $T< T^*$, we have
    \begin{equation}
        \label{eq:firstDerivativeBound2}
        0 \leq \partial_x \bar F \leq m\,,
    \end{equation}
    \begin{equation}
        \label{eq:timeBound}
       \abs{\partial_t \bar F} \leq  \frac{m(m+5)}{2} + \frac{5}{T^* - T}\,,
    \end{equation}
    and 
    \begin{equation}
        \label{eq:secondDerivativeBound3}
        -\frac{2}{T^* - T} \leq \partial_x^2 \bar F \leq 0 \,.
    \end{equation}
\end{lemma}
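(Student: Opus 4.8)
The plan is to carry over to the limit $\bar F$ the pointwise bounds on the smooth approximations $F^{\epsilon}$ that were already established in the proof of Lemma~\ref{lem:convergence-F}, by rephrasing each bound as a geometric property (monotonicity in $x$, Lipschitz continuity in $t$, concavity in $x$, or semiconvexity in $x$) that is manifestly preserved under locally uniform convergence, and then reading that property back as the corresponding one-sided derivative inequality in the viscosity sense.

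First I would fix $T < T^*$ and, for the sequence $\{\epsilon_i\}$ of Lemma~\ref{lem:convergence-F}, record on $[0,\infty)\times[0,T]$ the estimates $0 \le \partial_x F^{\epsilon_i} \le m$ and $-m_2^{\epsilon_i}(t) \le \partial_x^2 F^{\epsilon_i} \le 0$ coming from the Bernstein representation (the upper bound because $\partial_x^2 F^{\epsilon_i}(x,t) = -\int_0^\infty s^2 e^{-sx}\, d\pi^{\epsilon_i}(s,t)$), together with $m_2^{\epsilon_i}(t) \le (T^*-T)^{-1}$ from Lemma~\ref{lem:eps-second-moment-bound} and $\abs{\partial_t F^{\epsilon_i}} \le \tfrac{m(m+5)}{2} + \tfrac{3}{T^*-T}$ from the proof of Lemma~\ref{lem:convergence-F}. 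Equivalently: $F^{\epsilon_i}(\cdot,t)$ is nondecreasing, $x \mapsto mx - F^{\epsilon_i}(x,t)$ is nondecreasing, $F^{\epsilon_i}$ is Lipschitz in $t$ with constant $\tfrac{m(m+5)}{2} + \tfrac{3}{T^*-T}$, $F^{\epsilon_i}(\cdot,t)$ is concave, and $x \mapsto F^{\epsilon_i}(x,t) + \tfrac{1}{2(T^*-T)} x^2$ is convex. Letting $i \to \infty$, each of these five properties passes to the locally uniform limit, so $\bar F$ enjoys all of them on $[0,\infty)\times[0,T]$. In particular $\bar F(\cdot,t)$ is simultaneously concave and semiconvex with linear modulus $(T^*-T)^{-1}$, hence $C^{1,1}$ in $x$ on $(0,\infty)$, and the five properties say precisely that $0 \le \partial_x \bar F \le m$, $\abs{\partial_t \bar F} \le \tfrac{m(m+5)}{2} + \tfrac{3}{T^*-T}$, and $-(T^*-T)^{-1} \le \partial_x^2 \bar F \le 0$ in the viscosity (here also a.e.) sense. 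Since $T < T^*$ was arbitrary, this gives \eqref{eq:firstDerivativeBound2}--\eqref{eq:secondDerivativeBound3} on $(0,\infty)\times(0,T)$ for every $T < T^*$.

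If one prefers to argue directly with test functions instead of invoking stability of semiconvexity, the same conclusion follows from the standard observation that when $\bar F - \phi$ has a strict local extremum at $(x_0,t_0)$ there are nearby local extrema $(x_i,t_i) \to (x_0,t_0)$ of $F^{\epsilon_i} - \phi$, at which $\partial_x\phi$ and $\partial_t\phi$ equal the bounded quantities $\partial_x F^{\epsilon_i}$, $\partial_t F^{\epsilon_i}$, while $\partial_x^2 \phi$ is squeezed against $\partial_x^2 F^{\epsilon_i} \in [-(T^*-T)^{-1},0]$, with the direction of the inequality depending on whether the extremum is a maximum or a minimum. I do not anticipate a genuine obstacle here; the only care needed is the bookkeeping that matches each pointwise bound to the correct one-sided viscosity inequality, and checking that all the constants above are taken uniformly on $[0,\infty)\times[0,T]$ with $T < T^*$ fixed, since they degenerate as $T \uparrow T^*$ --- which is exactly why the statement is confined to $t < T^*$.
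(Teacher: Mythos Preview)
Your proposal is correct and takes essentially the same approach as the paper: both pass the pointwise derivative bounds on $F^{\epsilon}$ to $\bar F$ by recasting them as properties (Lipschitz continuity, concavity, semiconvexity) that are stable under locally uniform convergence. The paper carries this out for \eqref{eq:secondDerivativeBound3} by explicitly bounding the second symmetric difference quotient of $F^{\epsilon}$ via Taylor's theorem and then letting $\epsilon\to 0$ and $h\to 0$, which is exactly your semiconvexity-plus-concavity argument written in coordinates.
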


\begin{proof}
    This is a  consequence of 
    $0 \leq \partial_x F^\epsilon\leq m$, 
    Lemmas~\ref{lem:eps-second-moment-bound} and~\ref{lem:convergence-F}.
    In particular, the inequalities~\eqref{eq:firstDerivativeBound2}
    and~\eqref{eq:timeBound} are 
    straightforward.

    Let us now prove~\eqref{eq:secondDerivativeBound3}.
    As $F^\epsilon$ is the Bernstein transform of $\rho^\epsilon$, we have that
    for $(x,t)\in (0,\infty)\times (0,T)$,
    \begin{equation*}
        -\frac{2}{ T^*- T} \leq 
        - m_2^\epsilon(t) = \partial_x^2 F^\epsilon(0,t) \leq \partial_x^2 F^\epsilon(x,t) \leq 0\,.
    \end{equation*}
    
    Furthermore, for $h>0$, there exist $\theta, \tilde \theta \in (0,1)$ such that
    \begin{align*}
        0&\geq
        \frac{ F^\epsilon(x+ 2h,t) + F^\epsilon (x,t) - 2F^\epsilon(x+h,t) }{h^2} \\
         & = \frac{ \partial_x^2 F^\epsilon(x+ h +\theta h,t)h^2 + \partial_x^2 F^\epsilon(x + \tilde \theta h,t)h^2}{2h^2}\\
         & = \frac{1}{2}(\partial_x^2 F^\epsilon(x+ h +\theta h,t) + \partial_x^2 F^\epsilon(x + \tilde \theta h,t))\\
         & \geq 
        \partial_x^2 F^\epsilon (x,t) 
        \geq
        -\frac{2}{ T^* - T} \,.
    \end{align*}
    In the above, we used the fact that $\partial_x^3 F^\epsilon \geq 0$.
    Letting $\epsilon \to 0$, we obtain
    \begin{equation*}
        0\geq \frac{ \bar F(x+ 2h,t) + \bar F(x,t) - 2\bar F(x+h,t) }{h^2}  \geq -\frac{2}{ T^* - T} \,.
    \end{equation*}
    Inequality~\eqref{eq:secondDerivativeBound3} follows immediately by letting $h\to 0$.
\end{proof}

In order to show the uniqueness of solutions to equation~\eqref{eq:main}, we need the following
comparison principle.
A similar result was proven in~\cite{TranVan2021}. We provide the details here
for self-containment.
  \begin{lemma}[Comparison Principle for~\eqref{eq:main}] \label{lem:CP}
      For $T\in (0,T^*)$,
    let $u$ be a sublinear viscosity subsolution and $v$ be a sublinear viscosity supersolution to equation~\eqref{eq:main}, respectively. Then $u\leq v$.
  \end{lemma}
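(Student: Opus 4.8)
The plan is to establish the comparison principle for the singular Hamilton--Jacobi equation~\eqref{eq:main} by the standard doubling-of-variables technique, adapted to handle the singular term $F/x$ and the sublinearity of the solutions. Suppose for contradiction that $\sup_{[0,\infty)\times[0,T]}(u-v) = \sigma > 0$. Since $u$ and $v$ are sublinear with $u(x,0) = v(x,0) = F_0(x)$, and since the constraint $0\le F \le mx$ forces $u(0,t) = v(0,t) = 0$, the positive gap can only be attained in the interior (after, if needed, a perturbation $v + \eta/(T-t)$ to push the supremum away from $t = T$ and to obtain a strict supersolution). First I would introduce the standard penalization
\begin{equation*}
    \Phi_{\delta,\beta,\gamma}(x,y,t) = u(x,t) - v(y,t) - \frac{|x-y|^2}{2\delta} - \beta(x^2 + y^2) - \frac{\gamma}{T-t}\,,
\end{equation*}
where $\beta>0$ handles the unboundedness of the domain in $x$ (using sublinearity of $u$, $-v$ to guarantee a maximizer exists and that the $\beta$-terms vanish in the limit), and $\gamma>0$ makes $v$ a strict supersolution. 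Let $(x_\delta, y_\delta, t_\delta)$ be a maximum point; by the usual arguments $|x_\delta - y_\delta|^2/\delta \to 0$, $t_\delta$ stays bounded away from $0$ and $T$ for $\delta,\beta,\gamma$ small, and $u(x_\delta,t_\delta) - v(y_\delta,t_\delta) \to \sigma$ along a subsequence.

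Next I would invoke the theorem on sums (Crandall--Ishii lemma) at $(x_\delta,y_\delta,t_\delta)$ to produce $a \in \R$ and $p = (x_\delta - y_\delta)/\delta$ such that $(a, p + 2\beta x_\delta)$ is in the parabolic superjet of $u$ and $(a, p - 2\beta y_\delta)$ is in the parabolic subjet of $v$ — here I use that only first-order information in $x$ enters~\eqref{eq:main}, so no second-order matrix inequality is even needed, which simplifies matters considerably. Writing the sub/supersolution inequalities and subtracting, the $a$'s cancel, the Hamiltonian quadratic terms $\frac12(p-m)(p-m-1)$ nearly cancel up to $O(\beta)$ errors, and the key remaining term is
\begin{equation*}
    \frac{u(x_\delta,t_\delta)}{x_\delta} - \frac{v(y_\delta,t_\delta)}{y_\delta} + \frac{\gamma}{(T-t_\delta)^2} + (\text{errors}) \le 0\,.
\end{equation*}
The singular term is where the real work lies, so the hard part will be controlling $u(x_\delta,t_\delta)/x_\delta - v(y_\delta,t_\delta)/y_\delta$ from below. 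The idea is to split it as $\big(u(x_\delta,t_\delta) - v(x_\delta,t_\delta)\big)/x_\delta + v(x_\delta,t_\delta)\big(1/x_\delta - 1/y_\delta\big) + \big(v(x_\delta,t_\delta) - v(y_\delta,t_\delta)\big)/y_\delta$: the first summand is $\ge 0$ up to $o(1)$ since $u - v$ is near its positive maximum there; the third is $O(|x_\delta - y_\delta|/y_\delta)$, and the middle is $v(x_\delta,t_\delta)|x_\delta - y_\delta|/(x_\delta y_\delta) = O(m|x_\delta - y_\delta|/y_\delta)$ using $v \le mx$. One must therefore argue $x_\delta, y_\delta$ stay uniformly bounded below by a positive constant — this follows because near $x = 0$ the constraint $0 \le u, v \le mx$ together with Lipschitz bounds forces $u/x$ and $v/x$ to be comparable, so the maximum of $u - v$ cannot be approached at the origin; alternatively one bounds $|x_\delta - y_\delta|/y_\delta$ directly using that $|x_\delta - y_\delta| = O(\sqrt\delta)$ while a separate barrier argument near $0$ keeps $y_\delta \ge c > 0$. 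Either way, sending $\delta \to 0$, then $\beta \to 0$, kills all error terms and leaves $\gamma/(T-t_\delta)^2 \le 0$, a contradiction; finally letting $\gamma \to 0$ gives $\sigma \le 0$, hence $u \le v$.

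I expect the main obstacle to be exactly the treatment of the singular term $F/x$ near $x = 0$: one needs the a priori information that admissible sub/supersolutions vanish at the origin and are Lipschitz (which is available here from the constraint $0 \le F \le mx$ built into the definition of solution to~\eqref{eq:main}) in order to prevent the doubling points from collapsing to $x = 0$, where $1/x$ blows up. A secondary technical point is the unbounded spatial domain, handled by the $\beta(x^2+y^2)$ penalization together with the standing sublinearity hypothesis on $u$ and $v$, which guarantees the penalized functional attains its maximum and that the penalization is asymptotically negligible. Everything else — the cancellation of the quadratic Hamiltonian, the strict-supersolution trick with $\gamma/(T-t)$, and the passage to the limit — is routine.
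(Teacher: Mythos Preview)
Your approach is sound but takes a genuinely different route from the paper. The paper does \emph{not} perform the doubling-of-variables argument directly on the singular equation. Instead, it desingularizes by replacing $1/x$ with $1/\phi_n(x)$ where $\phi_n(x)=\max\{1/n,x\}$, observes that $u$ (being nonnegative) remains a subsolution of the cutoff equation since $u/\phi_n\le u/x$, and then shows that $v^n\defeq v+m/n$ is a supersolution of the cutoff equation: for $x\ge 1/n$ this is immediate, while for $x<1/n$ one uses the constraint $v\le mx$ to get $v^n/\phi_n - m = nv \ge 0 \ge v/x - m$. The cutoff problem is then a standard Hamilton--Jacobi equation on $(0,\infty)$ with a bounded zeroth-order coefficient, so classical comparison yields $u\le v^n$, and sending $n\to\infty$ finishes.

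Your direct doubling argument trades the cutoff trick for an explicit analysis of why the maximizers $(x_\delta,y_\delta)$ stay uniformly away from the origin; this is the correct observation, and your justification (from $u\le mx_\delta$ and $u(x_\delta,t_\delta)-v(y_\delta,t_\delta)\gtrsim\sigma$ one gets $x_\delta\gtrsim\sigma/m$, hence also $y_\delta$ for small $\delta$) is right. The paper's route is shorter and more robust: it bypasses the need to track maximizers near $x=0$ and reduces everything to an off-the-shelf comparison result, at the cost of the slightly clever supersolution perturbation $v+m/n$, which works precisely because the a priori constraint $0\le F\le mx$ is baked into~\eqref{eq:main}. Your route is more self-contained but requires the extra bookkeeping you outlined for the singular term, and you should be aware that the quadratic growth of the Hamiltonian in $p$ means controlling $H(p+2\beta x_\delta)-H(p-2\beta y_\delta)$ is not entirely automatic without some Lipschitz information on $u$ or $v$; this is not special to your approach (the paper's invocation of ``classical theory'' for the cutoff equation faces the same issue), but it is worth flagging.
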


  \begin{proof}
    Since \eqref{eq:main} is singular at $x=0$, we cut off its singularity  by introducing a sequence of function $\set{\varphi_n}$, where
    \begin{equation*}
    \varphi_n(x) = \max\left\{ \frac{1}{n},x \right\} \quad \text{ for all } x \in [0,\infty)\,.
    \end{equation*}
    For each $n\in \N$,
    we consider the following approximating Hamilton-Jacobi equation
    \begin{equation} \label{e:cutoff}
      \begin{cases}
        \partial_t F + \frac{1}{2} (\partial_x F - m) ( \partial_x F - m - 1) + \frac{F}{\varphi_n(x)}  -m = 0 \quad &\text{ in } (0,\infty)\times (0,T)\,,\\
        F(x,0) = F_0(x) \quad &\text{ on } [0,\infty)\,,\\
        F(0,t) = 0 \quad &\text{ on } [0,\infty)\,,
      \end{cases}
    \end{equation}
    We claim that  $u$ is a subsolution, and $v^n \defeq v + \frac{m}{n}$ is a supersolution to equation~\eqref{e:cutoff}, respectively.
    It is clear to see that $u$ is a subsolution. 
    To check that $v$ is a supersolution, we note that,
    \begin{equation*}
      \frac{v + \frac{m}{n}}{\varphi_n} -m = \begin{cases}
        \frac{v}{x} + \frac{m}{nx} - m\geq \frac{v}{x} - m, &\text{ for } x \geq \frac{1}{n} \,, \\
        nv + m -m \geq  \frac{v}{x} -m, &\text{ for } x < \frac{1}{n} \,.
      \end{cases}
    \end{equation*}
    Therefore, 
    \begin{align*}
     & \partial_t v^n + \frac{1}{2} (\partial_x v^n - m) ( \partial_x v^n -m - 1) + \frac{v^n}{\varphi_n(x)} - m \\
     &\geq  \partial_t v + \frac{1}{2} (\partial_x v - m) ( \partial_x v -m - 1) + \frac{v}{x} - m  \geq 0
    \end{align*}
    in the viscosity sense. By the classical theory of viscosity solution applied to equation~\eqref{e:cutoff}, we deduce that 
    \begin{equation*}
      u \leq v^n \,.
    \end{equation*}
    As $v^n \to v$  uniformly when $n\to \infty$, we then conclude
    \begin{equation*}
      u \leq v \quad \text{ on } [0,\infty)\times [0,T) \,,
    \end{equation*}
   as desired. 
  \end{proof}
\begin{corollary}
    \label{cor:unique}
    Let $\bar F$ be the function as in Lemma~\ref{lem:convergence-F}.
    Then $\bar F$ is the unique sublinear  viscosity solution
    to equation~\eqref{eq:main} on $[0,\infty)\times [0,T)$
    for $T < T^*$.
    As a consequence, the convergence in Lemma~\ref{lem:convergence-F} is in full sequence, that is,
    locally uniformly on $[0,\infty)\times [0,T^*)$, we have
    \begin{equation}
        \label{eq:full-convergence}
       \lim_{\epsilon \to 0} F^\epsilon = \bar F\,.
    \end{equation}
\end{corollary}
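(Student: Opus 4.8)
The plan is to get uniqueness directly from the comparison principle (Lemma~\ref{lem:CP}) and then bootstrap the subsequential convergence in Lemma~\ref{lem:convergence-F} to convergence of the full family by a routine precompactness argument.

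First I would check that $\bar F$ lies in the class to which Lemma~\ref{lem:CP} applies. Since $0\le F^\epsilon(x,t)\le mx$ on $[0,\infty)^2$, this bound survives the locally uniform limit, so $0\le \bar F(x,t)\le mx$; in particular $\bar F$ is sublinear in $x$, and by Lemma~\ref{lem:convergence-F} it is a viscosity solution of~\eqref{eq:main} on $[0,\infty)\times[0,T^*)$. Now fix $T\in(0,T^*)$ and let $\tilde F$ be any sublinear viscosity solution of~\eqref{eq:main} on $[0,\infty)\times[0,T)$. Applying Lemma~\ref{lem:CP} with $u=\bar F$ (a sublinear viscosity subsolution) and $v=\tilde F$ (a sublinear viscosity supersolution) gives $\bar F\le \tilde F$ there; exchanging the roles of $\bar F$ and $\tilde F$ gives the reverse inequality, hence $\bar F=\tilde F$ on $[0,\infty)\times[0,T)$. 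Since $T<T^*$ is arbitrary, $\bar F$ is the unique sublinear viscosity solution of~\eqref{eq:main} on $[0,\infty)\times[0,T^*)$.

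For~\eqref{eq:full-convergence}, the key observation is that the a priori bounds used in the proof of Lemma~\ref{lem:convergence-F} — namely $0\le\partial_x F^\epsilon\le m$, $\abs{\partial_x^2 F^\epsilon}\le m_2^\epsilon$, the estimate~\eqref{ine:est-G} on $G^\epsilon$, and the resulting bound on $\abs{\partial_t F^\epsilon}+\abs{\partial_x F^\epsilon}$ — are uniform in $\epsilon\in(0,1)$ on every slab $[0,\infty)\times[0,T]$ with $T<T^*$ (this is exactly what Lemma~\ref{lem:eps-second-moment-bound} supplies). Hence, by Arzel\`a–Ascoli, the whole family $\set{F^\epsilon}_{\epsilon\in(0,1)}$ is precompact in $C([0,\infty)\times[0,T^*))$ for the topology of local uniform convergence, and by the stability of viscosity solutions (as in Lemma~\ref{lem:convergence-F}) every subsequential limit is a viscosity solution of~\eqref{eq:main} satisfying $0\le \cdot\le mx$, hence a sublinear one. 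By the uniqueness just established, every such limit equals $\bar F$. The standard argument — if every subsequence of $\set{F^\epsilon}$ has a further subsequence converging locally uniformly to $\bar F$, then $F^\epsilon\to\bar F$ locally uniformly — then yields~\eqref{eq:full-convergence}.

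I do not expect a genuine obstacle here: the substance is already contained in Lemmas~\ref{lem:convergence-F} and~\ref{lem:CP}. The only point deserving a little care is that the bounds feeding both Arzel\`a–Ascoli and the stability of viscosity solutions are uniform in $\epsilon$ down to $\epsilon=0$, so that the limiting Hamiltonian $H^0$ is reached in the limit and the constraint $0\le F^\epsilon\le mx$ is preserved; both are immediate from~\eqref{ine:est-G} and Lemma~\ref{lem:eps-second-moment-bound}.
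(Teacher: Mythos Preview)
Your proposal is correct and follows essentially the same route as the paper: uniqueness from the comparison principle (Lemma~\ref{lem:CP}), then full-family convergence from precompactness plus uniqueness of the limit. The only cosmetic difference is that you verify sublinearity via the inherited bound $0\le \bar F\le mx$, whereas the paper uses the time-derivative estimate~\eqref{eq:timeBound} to get $\bar F(x,t)\le Ct+F_0(x)$; if ``sublinear'' is read as $o(x)$ rather than at-most-linear, you would want to add one line noting that $F_0(x)/x\to 0$ (by dominated convergence, since $m_1(0)<\infty$) and that this passes to $\bar F$ through the same time bound, but the argument is otherwise identical.
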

\begin{proof}
    First, note that $\bar F \geq 0$ as it is a subsequential limit of $F^{\epsilon_i}\geq 0$.
    By~\eqref{eq:timeBound},
    \begin{equation*}
        \bar F (x,t) \leq\paren[\Big]{ \frac{m(m+5)}{2} + \frac{5}{T^* - T}} t + F_0(x)\,,
    \end{equation*}
    for  $(x,t)\in [0,\infty)\times [0,T)$.
    Therefore, $\bar F$ is sublinear on $[0,\infty)\times [0,T)$.
    By Lemma~\ref{lem:CP}, $\bar F$ is the unique sublinear viscosity solution to equation~\eqref{eq:main}.
    By this uniqueness, \eqref{eq:full-convergence} follows immediately.
\end{proof}

\section{Local existence of solutions to the C-F equation~\eqref{eq:cf} } 
\label{sec:wellposed}

We now  prove our main result, Theorem~\ref{t:main}.
Throughout this section, we will always assume the setting of Theorem~\ref{t:main}.
Let $F=\bar F$ be the sublinear viscosity solution to equation~\eqref{eq:main} found in
Section~\ref{sec:approximating-frag}.
By Lemma~\ref{lem:convergence-DF}, 
we already have that $F \in C^{1,1}((0,\infty)\times (0,T^*))$. 
Let us now use this result to yield further that $F \in C^\infty((0,\infty)\times (0,T^*))$.

\begin{proposition} \label{prop:F-smooth}
    Let $F=\bar F$ be the sublinear viscosity solution to equation~\eqref{eq:main} for $T=T^*$. 
    Then, $F \in C^\infty((0,\infty)\times ( 0, T^*))$.
\end{proposition}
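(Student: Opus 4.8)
The plan is to upgrade the regularity of $F$ by the classical method of characteristics. By Lemma~\ref{lem:convergence-DF}, $F \in C^{1,1}((0,\infty)\times(0,T^*))$, so $F$ is a classical solution of the first-order Hamilton--Jacobi equation $\partial_t F + H(x,F,\partial_x F) = 0$ with $H(x,u,p) = \tfrac12(p-m)(p-m-1) + \tfrac{u}{x} - m$, which is smooth in $(x,u,p)$ for $x>0$. The structural fact driving everything is that $H_p = \partial_x F - m - \tfrac12 \in [-m-\tfrac12,-\tfrac12]$ throughout the domain, so characteristics move in the direction of decreasing $x$ at a speed bounded away from $0$ and from $\infty$.

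Fix $(x_0,t_0)$ with $x_0>0$ and $0<t_0<T^*$. First I would run the characteristic through $(x_0,t_0)$ backward in time: since $\dot X = \partial_x F(X,t) - m - \tfrac12 \in [-m-\tfrac12,-\tfrac12]$, the $x$-coordinate increases as $t$ decreases, so this characteristic stays inside a compact set $K \subset (0,\infty)\times[0,t_0]$ and reaches $\{t=0\}$ at a point $y_0 > x_0 > 0$. Along any characteristic the $1$-jet $(X,u,p) := \big(X,\,F(X,\cdot),\,\partial_x F(X,\cdot)\big)$ solves the autonomous system $\dot X = p - m - \tfrac12$, $\dot u = \tfrac12 p^2 - \tfrac12 m(m-1) - \tfrac{u}{X}$, $\dot p = \tfrac{u}{X^2} - \tfrac{p}{X}$, which is smooth on $\{X>0\}$: the first two equations come from~\eqref{eq:main} and the chain rule (legitimate because $F\in C^1$), the third from differentiating~\eqref{eq:main} in $x$.

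Next I would use that the initial datum $F_0 = \mathfrak{B}[\rho_0]$ is $C^\infty$ on $(0,\infty)$: one has $F_0^{(k)}(x) = (-1)^{k+1}\int_0^\infty s^k e^{-xs}\rho_0(s)\,ds$, and for $x>0$ differentiation under the integral is justified by the exponential weight together with $m_2(0)<\infty$ (the relevant dominating function being $\asymp s^2$). Hence the solution of the characteristic system with datum $(y,F_0(y),F_0'(y))$ at $t=0$, written $(X(t;y),u(t;y),p(t;y))$, depends smoothly on $(t,y)$ by smooth dependence of ODE solutions on initial conditions, and $\partial_y X(t;y) = \exp\!\big(\int_0^t \partial_x^2 F(X(s;y),s)\,ds\big) \in \big[\tfrac{T^*-t}{T^*},\,1\big]$ by~\eqref{eq:secondDerivativeBound3}. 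Thus $y\mapsto X(t;y)$ is a smooth local diffeomorphism near $y_0$, and inverting it and writing $F(x,t) = u\big(t,\,X(t;\cdot)^{-1}(x)\big)$ exhibits $F$ as a composition of smooth functions near $(x_0,t_0)$, which is the assertion.

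The main obstacle is making this rigorous at the regularity actually available. Since $F$ is only known a priori to be $C^{1,1}$, the spatial derivative of~\eqref{eq:main}, and with it the third characteristic equation and the identity for $\partial_y X$, holds a priori only almost everywhere, while the flow of the merely Lipschitz field $\partial_x F - m - \tfrac12$ is only Lipschitz in $y$; one must therefore connect this flow to the smooth jet-system with care — e.g. by a mollification/commutator argument, or by exploiting the one-sided bound $\partial_x^2 F \le 0$ to work directly with difference quotients as in the proof of Lemma~\ref{lem:convergence-DF}. A secondary point is that~\eqref{eq:secondDerivativeBound3} is stated on the open interval $(0,T^*)$: one either extends the $C^{1,1}$ bound up to $t=0$ using the uniform estimates on $\{F^\epsilon\}$, or — more cleanly — constructs the smooth solution directly by the characteristics emanating from $t=0$, using the a priori bounds $0\le p\le m$, $0\le u\le mX$, and~\eqref{eq:secondDerivativeBound3} to keep the flow a diffeomorphism up to $T^*$, and then identifies it with $\bar F$ through the uniqueness in Corollary~\ref{cor:unique}.
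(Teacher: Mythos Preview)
Your approach via the method of characteristics---setting up the Hamiltonian jet system $(X,u,p)$, using the speed bound $\dot X\in[-m-\tfrac12,-\tfrac12]$ together with the second-derivative estimate~\eqref{eq:secondDerivativeBound3} to show the forward flow $y\mapsto X(t;y)$ is a diffeomorphism, and then inverting to write $F=u\circ X^{-1}$---is exactly the paper's proof. The paper proceeds somewhat more loosely over the $C^{1,1}$-versus-$C^2$ issue you flag as the main obstacle (it differentiates the characteristic equation in $x$ and uses the viscosity-sense bound on $\partial_x^2 F$ as if it were classical), so your caution there is well placed; your suggested remedies are reasonable.
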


\begin{proof}
  We proceed by using the method of characteristics (see~\cite[Chapter 3]{Evans2010}).
  Fix $t\in (0,T^*)$ and
  denote by $X(x,s)$ the characteristic at time $s\in [0,t]$
  starting from $x> 0$, that is, $X(x,0)=x$.
  Set $P(x,s)=\partial_x F(X(x,s),s))$, and $Z(x,s)=F(X(x,s),s)$.
  When there is no confusion, 
  we just write $X(s),P(s),Z(s)$ instead of 
  $X(x,s), P(x,s), Z(x,s)$, respectively.
  Then, $X(0)=x$, $P(0)=\partial_x F_0(x)$, $Z(0)=F_0(x)$.
  We have the following Hamiltonian system
  \[
  \begin{cases}
  \dot X = \partial_p H(P(s),Z(s),X(s)) = P(s) - \left(m+\frac{1}{2}\right)\,,\\
  \dot P = -\partial_x H - (\partial_z H)P = \frac{Z(s)}{X(s)^2}- \frac{P(s)}{X(s)}\,,\\
  \dot Z = P\cdot \partial_pH - H = \frac{P(s)^2}{2} - \frac{Z(s)}{X(s)} + \frac{m(1-m)}{2}\,.
  \end{cases}
  \]
  Note first that 
  $F \in C^{1,1}((0,\infty)\times (0,T^*)) $, 
  and also $0 \leq \partial_x F \leq m$ thanks to Lemma~\ref{lem:convergence-DF}.
    Therefore,
  \begin{equation} \label{eq:X dot}
        - \left(m+\frac{1}{2}\right) \leq \dot X \leq -\frac{1}{2}\,.
  \end{equation}
  Besides, the concavity of $F$ in $x$ yields further that
  \[
   \dot P =  \frac{Z(s)}{X(s)^2}- \frac{P(s)}{X(s)} 
   =\frac{1}{X(s)} \left( \frac{F(X(s),s)}{X(s)} 
   - \partial_x F(X(s),s) \right) \geq 0\,.
  \]
  
  Let us now show that $\{X(x,\cdot)\}_{x \in (0,\infty)}$ are well-ordered in $(0,\infty)\times (0,T^*)$, and none of these two characteristics intersect. 
  Assume otherwise that $X(x,s)=X(y,s) > 0$ 
  for some $x \neq y$ and $s\in (0,t]$.
  As $F \in C^{1,1}((0,\infty)\times (0,T^*))$, $\partial_x F(X(x,s),s)$ is uniquely defined, and therefore,
  \[
  P(x,s)=P(y,s)= \partial_x F(X(x,s),s) \quad \text{ and } \quad Z(x,s)=Z(y,s) = F(X(x,s),s)\,.
  \]
  
    
  Hence, $(X,P,Z)(x,s)=(X,P,Z)(y,s)$, and this contradicts the uniqueness of 
  solutions to the Hamiltonian system on $[0,s]$ as we reverse the time.

    By Lemma~\ref{lem:convergence-DF}, we have that for 
    $t< T^*$ and $(x,s)\in (0,\infty)\times [0,t]$, 
  \[
        -\frac{2}{ T^* - t} 
   \leq \partial^2_x F(x,s) \leq 0  
  \]
  in the viscosity sense.
  We differentiate the first equation in the 
  Hamiltonian system with respect to $x$ and 
  use the fact that $P(x,s)=\partial_x F(X(x,s),s)$ 
  to yield that
  \[
  \partial_x \dot X(x,s) = \partial_x P(x,s) = \partial^2_x F(X(x,s),s)\cdot  \partial_x X(x,s) \geq
        -\frac{2}{ T^* - t} \partial_x X(x,s).
  \]
  Thus, $\partial_x X(x,s)$ satisfies a differential inequality, and in particular, 
  \[
  s\mapsto e^{\frac{2s}{ T^* - t}} \partial_x X(x,s) 
  \qquad \text{ is nondecreasing on } [0,t].
  \]
  Therefore, $\partial_x X(x,s) > 0$ for all $(x,s)\in (0,\infty)\times [0,t]$ as
  $\partial_x X(x,0) = 1$.
  By the inverse function theorem, $X^{-1}(\cdot,s)$ is then locally smooth, and
  \[
  F(x,s) = Z(X^{-1}(x,s),s)
  \]
 is smooth as $Z$ is also smooth.
The proof is complete.
\end{proof}


To show the absolute-monotone property of $F$, we exploit the approximating functions $\set{F^\epsilon}_{\epsilon >0}$,
which are Bernstein functions themselves.
 This allows us to  completely avoid the technical tour de force
as in~\cite[Proposition 3.10]{TranVan2021}.

\begin{lemma}
    \label{lem:bernstein-property}
    Let $F=\bar F$ be the sublinear viscosity solution to equation~\eqref{eq:main} for $T=T^*$. 
    Then,
    for every $(x,t)\in (0,\infty)\times (0,T^*)$ and $k\in \N$,
\begin{equation}
    \label{eq:bernstein}
    (-1)^{k-1} \partial_x^k   F (x,t) \geq 0 \,.
\end{equation}
\end{lemma}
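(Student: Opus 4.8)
The plan is to transfer the absolute monotonicity (complete monotonicity of the derivative) from the approximating functions $F^\epsilon$ to the limit $\bar F$ via the locally uniform convergence established in Corollary~\ref{cor:unique}. The starting point is that each $F^\epsilon$ is the Bernstein transform of the genuine density $\rho^\epsilon$, so it is literally a Bernstein function in $x$ for each fixed $t$: we have $\partial_x^k F^\epsilon(x,t) = (-1)^{k-1}\int_0^\infty s^k e^{-sx}\,d\pi^\epsilon(s,t)$, hence $(-1)^{k-1}\partial_x^k F^\epsilon(x,t) \geq 0$ for every $k \in \N$ and every $(x,t)$. The only issue is that derivatives do not in general pass to the limit under uniform convergence. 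The resolution is that, by Proposition~\ref{prop:F-smooth}, $\bar F \in C^\infty((0,\infty)\times(0,T^*))$, which is precisely the regularity needed to justify the limit.

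First I would fix $k \in \N$, fix $t \in (0,T^*)$, and fix a compact interval $[x_1,x_2] \subset (0,\infty)$. The sign condition $(-1)^{k-1}\partial_x^k F^\epsilon(x,t) \geq 0$ says that $x \mapsto (-1)^{k-1}\partial_x^{k-1} F^\epsilon(x,t)$ is monotone (nondecreasing if $k$ is odd, well — more precisely the $k$-th derivative has a fixed sign). The cleanest route avoids differentiating limits directly: integrate. For instance, to get $(-1)^{k-1}\partial_x^k \bar F \geq 0$, it suffices to show that the function $g_\epsilon(x) := (-1)^{k-1}\partial_x^{k-1}F^\epsilon(x,t)$ converges pointwise to $(-1)^{k-1}\partial_x^{k-1}\bar F(x,t)$ and that monotonicity of functions is preserved under pointwise limits. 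Monotonicity passing to pointwise limits is trivial; the real content is the convergence of the $(k-1)$-st derivatives.

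To obtain convergence of derivatives, I would argue inductively on $k$. For $k=1$, $(-1)^0 \partial_x^0 \bar F = \bar F \geq 0$ is immediate and already noted. Suppose we know $\partial_x^{j}F^{\epsilon} \to \partial_x^{j}\bar F$ locally uniformly on $(0,\infty)\times(0,T^*)$ for all $j \leq k-2$ (this holds for $j=0$). From the bound $|\partial_x^k F^\epsilon(x,t)| \le m_2^\epsilon(t) x^{-(k-2)} \cdot (\text{const})$ — more carefully $|\partial_x^k F^\epsilon(x,t)| = \int s^k e^{-sx} d\pi^\epsilon \le C_{k,x_1} m_2^\epsilon(t) \le C_{k,x_1}/(T^*-T)$ on $[x_1,\infty)\times[0,T]$ using $s^{k}e^{-sx} \le s^2 e^{-sx_1}\sup_{s}(s^{k-2}e^{-s(x-x_1)})$ — the family $\{\partial_x^{k-1}F^\epsilon(\cdot,t)\}$ is uniformly bounded and equi-Lipschitz on $[x_1,x_2]$, so by Arzelà–Ascoli a subsequence converges uniformly to some continuous $h$. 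Since $\partial_x^{k-2}F^{\epsilon}\to\partial_x^{k-2}\bar F$ and the derivatives converge uniformly, $h = \partial_x^{k-1}\bar F$; moreover this identifies the full limit, so $\partial_x^{k-1}F^\epsilon(\cdot,t)\to\partial_x^{k-1}\bar F(\cdot,t)$, completing the induction. Then $(-1)^{k-1}\partial_x^{k-1}\bar F(x,t) = \lim (-1)^{k-1}\partial_x^{k-1}F^\epsilon(x,t)$ is a limit of monotone functions of $x$ (with monotonicity type dictated by the sign of $(-1)^{k-1}\partial_x^k F^\epsilon \ge 0$), hence monotone, and since $\bar F$ is $C^\infty$ in $x$ on $(0,\infty)$, this monotonicity gives $(-1)^{k-1}\partial_x^k \bar F(x,t) \ge 0$ pointwise.

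The main obstacle is the bookkeeping in the induction: one must make sure the derivative bounds on $F^\epsilon$ (which come from $\partial_x^k F^\epsilon = (-1)^{k-1}\int s^k e^{-sx}d\pi^\epsilon$ together with $m_2^\epsilon(t) \le 1/(T^*-t)$ from Lemma~\ref{lem:eps-second-moment-bound}) are uniform in $\epsilon$ on compact subsets of $(0,\infty)\times(0,T^*)$ bounded away from $x=0$ and $t=T^*$ — which they are, since $s^k e^{-sx}$ is bounded by a constant (depending on $k$ and $x_1>0$) times $s^2 e^{-sx_1/2}\cdot e^{-sx_1/2} s^{k-2}$, and $\sup_{s>0} s^{k-2}e^{-sx_1/2} < \infty$. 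Once the uniform local bounds on all $x$-derivatives are in hand, Arzelà–Ascoli plus the $C^\infty$ regularity of $\bar F$ from Proposition~\ref{prop:F-smooth} does everything, and the conclusion $(-1)^{k-1}\partial_x^k\bar F \ge 0$ follows. An alternative, perhaps slicker, phrasing: $\partial_x^{k-1}\bar F(x,t)$ has sign $(-1)^{k-2}$ (by the already-established lower-order cases) and is $C^1$ in $x$, and it is a locally uniform limit of the $C^1$ functions $\partial_x^{k-1}F^\epsilon(\cdot,t)$ whose derivatives are uniformly bounded; a standard fact then gives that $\partial_x^{k-1}F^\epsilon(\cdot,t)\to\partial_x^{k-1}\bar F(\cdot,t)$ in $C^1_{\mathrm{loc}}$ up to subsequence and hence (by uniqueness of the limit) in full, so $\partial_x^k\bar F = \lim\partial_x^k F^\epsilon$ has the sign of $(-1)^{k-1}$.
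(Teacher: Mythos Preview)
Your argument is correct, but the paper takes a shorter distributional route that sidesteps the inductive machinery entirely. The paper fixes $k$ and a nonnegative test function $\varphi\in C_c^\infty((0,\infty))$, integrates by parts $k$ times to write
\[
0 \le \int_0^\infty (-1)^{k-1}\partial_x^k F^\epsilon(x,t)\,\varphi(x)\,dx
   = -\int_0^\infty F^\epsilon(x,t)\,\partial_x^k\varphi(x)\,dx,
\]
passes to the limit $\epsilon\to0$ using only the locally uniform convergence $F^\epsilon\to\bar F$ (Corollary~\ref{cor:unique}), and then integrates by parts back using the smoothness of $\bar F$ from Proposition~\ref{prop:F-smooth}. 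This yields $\int (-1)^{k-1}\partial_x^k\bar F\,\varphi\ge 0$ for all such $\varphi$, hence the pointwise sign. Your approach instead establishes the stronger fact that $\partial_x^j F^\epsilon\to\partial_x^j\bar F$ locally uniformly for every $j$, via uniform higher-derivative bounds (from $\int s^k e^{-sx}\,d\pi^\epsilon\le C_{k,x_1}m_2^\epsilon(t)$) and Arzel\`a--Ascoli; this is more information than needed here, and costs the inductive bookkeeping you describe. The paper's trick---moving all derivatives onto the test function so that only $C^0$ convergence is required---is the key simplification you are missing.
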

\begin{proof}
    Fix $k\in \N$ and $t\in (0,T^*)$.
    For each $\epsilon>0$, let $F^\epsilon$ be the  solution to equation~\eqref{eq:eps}.
   For every test function $\varphi \in C^\infty_c((0,\infty))$ with $\varphi \geq 0$,
   we have that
   \begin{equation*}
      0\leq \int_0^\infty (-1)^{k-1} \partial_x^k F^\epsilon(x,t) \varphi(x) \, dx
       = \int_0^\infty (-1)  F^\epsilon(x,t) \partial_x^k \varphi(x) \, dx\,.
   \end{equation*}
   Letting $\epsilon\to 0$, we have
   \begin{equation*}
        \int_0^\infty (-1)  F(x,t) \partial_x^k \varphi(x) \, dx\geq 0\,.
   \end{equation*} 
   As $F\in C^\infty((0,\infty)\times (0,T^*))$, we integrate by parts once again to get
    \begin{equation*}
       \int_0^\infty (-1)^{k-1} \partial_x^k F(x,t) \varphi(x) \, dx
       \geq 0\,,
    \end{equation*}
    from which~\eqref{eq:bernstein} follows.
\end{proof}

\begin{proof}[Proof of Theorem~\ref{t:main}]
    Let $F=\bar F$ be the sublinear viscosity solution to equation~\eqref{eq:main} for $T=T^*$. 
    By Proposition~\ref{prop:F-smooth} and Lemma~\ref{lem:bernstein-property}, we deduce that
    for each $t\in (0,T^*)$, $F(\cdot,t)$ is a Bernstein function.
    Theorem~\ref{t:main} follows from the fact that for each Bernstein function $f$, 
    there exists a Borel measure $\mu$ so that $\mathfrak{B}[\mu] = f$ (see~\cite[Appendix]{TranVan2021}).
\end{proof}

\appendix
\section{Some moment bounds for solutions to~\eqref{eq:cf-eps}}
In this appendix, we will demonstrate a heuristic understanding on why equation~\eqref{eq:cf-eps}
is well-posed for all $t>0$, which was proven in~\cite{EscobedoLaurenccotMischlerPerthame2003}.
This essentially comes from the ability to control all the moments
based on certain differential inequalities.
Here, the strong fragmentation term plays a crucial role.
The argument here follows that in~\cite{EscobedoLaurenccotMischlerPerthame2003}.

Assume that $m_k(0) < \infty$ for all $k\in \N$.
Heuristically, we consider test functions $\phi(s) = s^k$
in Definition~\ref{def:weak-sol}
to read off the information about $m^\epsilon_k(t)$.
We will demonstrate how this is done for $k=2,3$.
Higher moments could be bounded in a similar manner inductively.

For $k=2$, by using $\phi(s)= s^2$, we get that
\begin{equation*}
    \frac{d}{dt} m^\epsilon_2(t)
    = 
    m_2^\epsilon(t)^2 - \frac{1}{6} m_3^\epsilon(t) - \frac{\epsilon}{6} m_4^\epsilon(t)\,.
\end{equation*}
We wish to control $m_2^\epsilon(t)^2$ to prevent blow-up in finite time of $m_2^\epsilon(t)$.
By H\"older's inequality, we have
\begin{align*}
   \paren[\Big]{\int_0^\infty \paren[\Big]{ s^{4/3} \rho_\epsilon^{1/3}}^3\, ds}^{1/3}
   \paren[\Big]{\int_0^\infty \paren[\Big]{ s^{2/3} \rho_\epsilon^{2/3}}^{3/2}\, ds}^{2/3}
   \geq
   \int_0^\infty s^2\rho_\epsilon \, ds\,.
\end{align*}    
Therefore,
\begin{equation*}
    m^\epsilon_4(t)  \geq \frac{m^\epsilon_2(t)^3}{m^2} \,.
\end{equation*}
This implies that
\begin{equation*}
    \frac{d}{dt} m^\epsilon_2(t)
    \leq 
    m_2^\epsilon(t)^2  - \frac{\epsilon}{6} \frac{m^\epsilon_2(t)^3}{m^2} \leq C_{\epsilon,2}\,,
\end{equation*}
for some $C_{\epsilon,2} >0$.
Therefore, for $t\geq 0$,
\begin{equation*}
    m_2^\epsilon(t) \leq m_2(0) +  C_{\epsilon,2} t\,.
\end{equation*}

For $k= 3$, using $\phi(s) = s^3$, we have 
\begin{equation*}
    \frac{d}{dt} m_3^\epsilon(t) 
    =
    3 m_3^\epsilon(t) m_2^\epsilon(t) - \frac{1}{12} m_4^\epsilon(t)
    -\frac{\epsilon}{12} m_5^\epsilon(t) \,.
\end{equation*}

By the Cauchy-Schwarz inequality, we have
\begin{equation*}
    m_5^\epsilon(t) m_1^\epsilon(t) \geq m_3^\epsilon(t)^2\,.
\end{equation*}
Therefore, 
\begin{equation*}
    m_5^\epsilon(t)  \geq \frac{m_3^\epsilon(t)^2}{m}\,,
\end{equation*}
and
\begin{equation*}
    \frac{d}{dt} m_3^\epsilon(t) 
    \leq 3(m_2(0) + C_{\epsilon,2}t) m_3^\epsilon(t) - \frac{\epsilon}{12 m} m_3^\epsilon(t)^2 
    \leq C_{\epsilon,3}(t+1)^2\,,
\end{equation*}
for some $C_{\epsilon,3}>0$.
Thus, for $t\geq 0$,
\begin{equation*}
    m_3^\epsilon(t) \leq m_3(0) + C_{\epsilon,3}(t+1)^3\,.
\end{equation*}


%
%
%
\printbibliography 

\end{document}